\theoremstyle{definition}
\newtheorem{ntn}{Notation}[section]
\theoremstyle{plain}
\newtheorem{cor}[ntn]{Corollary}
\newtheorem{lem}[ntn]{Lemma}
\newtheorem{prp}[ntn]{Proposition}
\newtheorem{thm}[ntn]{Theorem}
\theoremstyle{remark}
\newtheorem{rem}[ntn]{Remark}
\newtheorem{exa}[ntn]{Example}
\numberwithin{equation}{section}
\newcommand{\z}{\mathbb{Z}}
\newcommand{\q}{\mathbb{Q}}
\newcommand{\R}{\mathbb{R}}
\newcommand{\A}{\mathcal{A}}
\newcommand{\RR}{\mathcal{R}}
\newcommand{\mt}{\mapsto}
\newcommand{\arr}{\rightarrow}
\newcommand{\larr}{\longrightarrow}
\newcommand{\harr}{\hookrightarrow}
\newcommand{\se}{\subseteq}
\newcommand{\two}{\twoheadrightarrow}
\newcommand{\tail}{\rightarrowtail}
\newcommand{\inc}{{\rm inc}}
\newcommand{\coker}{{\rm coker}}
\newcommand{\im}{{\rm im}}
\newcommand{\Hom}{{\rm Hom}}
\newcommand{\id}{{\rm id}}
\newcommand{\tor}{{{\rm Tor}}}
\newcommand{\SL}{{\rm SL}}
\newcommand{\PSL}{{\rm PSL}}
\newcommand{\tors}{{{\rm Tor}_1^{\z}}}
\newcommand\blfootnote[1]{%
  \begingroup
  \renewcommand\thefootnote{}\footnote{#1}%
  \addtocounter{footnote}{-1}%
  \endgroup
}
\newtheoremstyle{athm}
  {}
  {}
  {\itshape}
  {}
  {\scshape}
  {}
  {.5em}
  {\thmnote{#3}}
\theoremstyle{athm}
\begin{document}
\title[ $H_3$ of stem extensions and Whitehead's quadratic functor]{The third homology of stem-extensions and 
Whitehead's quadratic functor}
\author{B.  Mirzaii}
\author{F. Y. Mokari}
\author{D. C. Ordinola}

\begin{abstract}

Let  $A \rightarrowtail G\twoheadrightarrow Q$ be a stem-extension and let $\rho: A\times G\to G$ be the multiplication map.
We show that there is a natural map
\[
\varphi: H_1(\Sigma_2^\epsilon, {\rm Tor}_1^{\mathbb{Z}}({}_{2^\infty}A,{}_{2^\infty}A))\to 
H_3(G,\mathbb{Z})/\rho_\ast(A \otimes_{\mathbb{Z}} H_2(G,\mathbb{Z})) 
\]
such that, the image of $\varphi$ coincides with the image of the natural map 
$H_3(A,\mathbb{Z})\to H_3(G,\mathbb{Z})/\rho_\ast(A \otimes_{\mathbb{Z}} 
H_2(G,\mathbb{Z}))$. An
important tool used here is Whitehead's quadratic functor $\Gamma$.
As part of our proof of the main result, we give a precise homological description  of the kernel of the natural map
$\Gamma(A) \to A\otimes_{\mathbb{z}} A$, $\gamma(a)\mapsto a\otimes a$.

\end{abstract}
\maketitle

\section*{Introduction}
\blfootnote{{\sf 2010 Mathematics Subject Classification:} 20J06, 55T10}
\blfootnote{{\sf Key Words:}  Homology of groups, stem-extensions, Eilenberg-MacLane spaces}
\blfootnote{{\sf Affiliation:} Institute of Mathematics and Computer Sciences (ICMC),
University of Sao Paulo (USP), Sao Carlos, Brazil.

E-mails:\ bmirzaii@icmc.usp.br, 

\ \ \ \ \ \ \ \ \ \ \   f.mokari61@gmail.com, 

\ \ \ \ \ \ \ \ \ \ \  d.carbajalordinola@gmail.com}

It is usually difficult to calculate the (integral) homologies of a given group $G$. One way to deal with this problem
is to take a normal subgroup $N$ of $G$ and study the  Lyndon-Hochschild-Serre spectral sequence associated to the
extension $N\tail G \two G/N$,  which ties up the homologies of these groups:
\[
E_{p,q}^2=H_p(G/N, H_q(N,\z))\Rightarrow H_{p+q}(G,\z).
\]
This is a powerful tool.  There are many interesting cases that the homologies of $N$ and $G/N$ are 
more accessible, and the above spectral sequence can be used to study the homologies of $G$.

The natural  homology maps $H_n(N,\z) \arr H_n(G,\z)$
and $H_n(G,\z)\arr H_n(G/N,\z)$ are hidden, in a way,  in the above spectral sequence. For example
$H_n(N,\z) \arr H_n(G,\z)$ is the composite $H_n(N,\z) \two E_{0,n}^\infty \harr H_n(G,\z)$.

In this article we will study the image of the map $H_3(A,\z) \arr H_3(G,\z)$, when $A\tail G\two Q$
is a quasi stem-extension. We say that a central extension is a quasi stem-extension if the maps
$\tor_i^\z(A, A)\arr \tor_i^\z(A, H_1(G,\z))$, induced by $A\arr G$, are trivial for $i=0,1$. For example perfect central 
extensions are quasi stem-extensions.

As our main theorem (Theorem \ref{2-torsion-image}) we show that when $A\tail G\two Q$ is a quasi 
stem-extension, then there is a natural map 
\[
\varphi: H_1(\Sigma_2^\epsilon, \tors({}_{2^\infty}A,{}_{2^\infty}A))\arr H_3(G,\z)/\rho_\ast(A \otimes_\z H_2(G,\z)) 
\]
such that the image of $\varphi$ coincides with the natural image of $H_3(A,\z)$ in $H_3(G,\z)/\rho_\ast(A \otimes_\z H_2(G,\z)) $.
In particular if the extension is a universal central extension, then the image of $H_3(A,\z)$ in $H_3(G,\z)$ coincides
with the image of $\varphi$, which is $2$-torsion. In the above map ${}_{2^\infty}A$ is the 2-power torsion subgroup of $A$,  
$\Sigma_2:=\{\id, \sigma^\varepsilon \}$ is the symmetric group with two elements and $\sigma^\varepsilon$ being the involution 
on $\tors({}_{2^\infty}A,{}_{2^\infty}A)$ induced by the involution $A\times A \arr A\times A$, $(a, b)\mapsto (b, a)$.

An important tool to investigate our main problem is Whitehead's quadratic functor. This functor, which first appeared in the context
of algebraic topology, is a functor from the category of abelian groups to itself  and usually is denoted by $\Gamma$. Most of important 
aspects of this functor is known and it has been generalised in various ways. 

For an abelian group $A$, we give a precise homological description of the kernel of the 
natural map
\[
\Gamma(A) \arr A\otimes_\z A,\ \ \ \ \ \gamma(a) \mapsto a\otimes a,
\]
which it is known to be $2$-torsion. 
In fact we prove that (Theorem \ref{H4}) we have  the exact sequence
\[
0\arr H_1(\Sigma_2^\varepsilon, \tors({}_{2^\infty}A,{}_{2^\infty}A)) \arr 
\Gamma(A) \arr A\otimes_\z A \arr H_2(A,\z)\arr 0.
\]

\section{Whitehead's quadratic functor}

A function $\psi: A \arr B$ of (additive) abelian groups is called a quadratic map if
\par (1) for any $a \in A$, $\psi(a)=\psi(-a)$,
\par (2) the function $A \times A \arr B$ with
$(a,b) \mapsto \psi(a+b)-\psi(a)-\psi(b)$ is bilinear.

For any abelian group $A$, there is a universal quadratic map 
\[
\gamma: A \arr \Gamma(A)
\]
such that for any quadratic map $\psi: A \arr B$,
there is a unique group homomorphism $\Psi: \Gamma(A) \arr B$ such that
$\Psi\circ \gamma=\psi$. It is easy to see that $\Gamma$ is a functor 
from the category of abelian groups to itself.

The functions $\phi: A \arr A/2$ and $\psi: A \arr A \otimes_\z A$, given by
$\phi(a)=\bar{a}$ and $\psi(a)=a\otimes a$ respectively, are quadratic maps.
Thus we get the canonical homomorphisms
\[
\Phi: \Gamma(A) \arr A/2,\  \gamma(a) \mapsto \bar{a}
\ \ \ \  \text{and} \ \ \ \  
\Psi:\Gamma(A) \arr A\otimes_\z A,\ \gamma(a) \mapsto a\otimes a.
\] 
Clearly $\Phi$ is surjective and $\coker(\Psi)=A \wedge A\simeq H_2(A,\z)$. 
Furthermore we have the bilinear pairing 
\[
[\ , \ ]: A \otimes_\z A \arr \Gamma(A), 
\ \ [a,b]:=\gamma(a+b)-\gamma(a)-\gamma(b).
\]
It is easy to see that for any $a,b,c \in A$, $[a,b]=[b,a]$, $\Phi[a,b]=0$, 
$\Psi[a,b]=a\otimes b + b \otimes a$ and $[a+b,c]=[a,c]+[b,c]$.
Using (1) and this last equation, for any $a,b,c \in A$, we obtain
\par (a) $\gamma(a)=\gamma(-a)$,
\par (b) $\gamma(a+b+c)-\gamma(a+b)-\gamma(a+c)-\gamma(b+c)
+\gamma(a)+\gamma(b)+\gamma(c)=0$.

Using these properties we can construct $\Gamma(A)$.  Let $\A$ be the free
abelian group generated by the symbols $w(a)$, $a \in A$. Set 
$\Gamma(A):=\A/\RR$, where $\RR$ denotes the relations (a) and (b) with
$w$ replaced by $\gamma$. Now $\gamma:A \arr \Gamma(A)$ is given by
$a \mapsto \overline{w(a)}$. 

Using this properties one can show that for any nonnegative integer $n$, we have
\[
\gamma(na)=n^2\gamma(a).
\]

It is known that the sequence  
\[
A \otimes_\z A \overset{[\ , \ ]}{\larr} \Gamma(A) \overset{\Phi}{\arr} A/2 \arr 0
\]
is exact and the kernel of $[\ ,\ ]$ is generated by the elements of the form 
$a\otimes b -b \otimes a$, $a, b \in A$. Therefore we have the exact sequence 
\begin{equation}\label{exact1}
0\arr H_0(\Omega_2, A\otimes_\z A) \overset{[\ , \ ]}{\larr} \Gamma(A) 
\overset{\Phi}{\arr} A/2\arr 0,
\end{equation}
where $\Omega_2:=\{\id, \omega\}$ and $\omega$ is the involution
$\omega(a\otimes b)=b\otimes a$ on $A\otimes_\z A$.

It is easy to see that the composition
\[
A \otimes_\z A \overset{[\ , \ ]}{\larr} \Gamma(A) \overset{\Psi}{\arr} A \otimes_\z A
\]
takes $a\otimes b$ to $a\otimes b+b\otimes a$. Moreover  the composition 
\[
\Gamma(A) \overset{\Psi}{\arr} A \otimes_\z A \overset{[\ , \ ]}{\larr} \Gamma(A) 
\]
coincide with multiplication by $2$. Thus $\ker(\Psi)$ is 2-torsion. 

To give a homological description of the kernel of $\psi$, we will need the following fact.

\begin{prp}\label{KA2}
For any abelian group $A$,  $\Gamma(A)\simeq H_4(K(A,2),\z)$, where
$K(A,2)$ is the Eilenberg-Maclane space of type $(A,2)$. 
\end{prp}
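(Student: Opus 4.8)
The plan is to construct a natural transformation $\Theta\colon\Gamma(-)\arr H_4(K(-,2),\z)$ from the universal property of $\Gamma$, and then to prove it is an isomorphism by d\'evissage to cyclic groups. For the construction, observe that an element $a\in A$ is the same as a homomorphism $\z\arr A$ and so induces a map of Eilenberg--MacLane spaces $K(\z,2)\arr K(A,2)$; since $K(\z,2)\simeq\mathbb{CP}^\infty$ we have $H_4(K(\z,2),\z)\simeq\z$, and I let $\gamma'(a)\in H_4(K(A,2),\z)$ be the image of a fixed generator. Modelling $K(-,2)$ by a topological abelian group, so that $K(A\oplus A,2)=K(A,2)\times K(A,2)$ and the addition $A\oplus A\arr A$ realizes the multiplication $\mu\colon K(A,2)\times K(A,2)\arr K(A,2)$, I would factor $a+b\colon\z\arr A$ as $\z\xrightarrow{\mathrm{diag}}\z\oplus\z\xrightarrow{(a,b)}A\oplus A\xrightarrow{+}A$ and push a generator of $H_4(K(\z,2),\z)$ through the induced maps, using the (divided power) coalgebra structure of $H_*(\mathbb{CP}^\infty,\z)$ for the first arrow, to obtain
\[
\gamma'(a+b)-\gamma'(a)-\gamma'(b)=\{a,b\},
\]
where $\{a,b\}:=\mu_*(a\otimes b)$ is the Pontryagin product of the classes $a,b\in A=H_2(K(A,2),\z)$. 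As $\{-,-\}$ is bilinear and $\gamma'(-a)=\gamma'(a)$ (since $-1$ acts trivially on $H_4(\mathbb{CP}^\infty,\z)$), the assignment $a\mapsto\gamma'(a)$ is quadratic, so it factors uniquely as $\Theta\circ\gamma$, and then $\Theta([a,b])=\{a,b\}$.

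Next I would reduce to finitely generated $A$: the functor $\Gamma$ commutes with filtered colimits (being given by finitary generators and relations), and so does $A\mapsto H_4(K(A,2),\z)$ (take a functorial model of $K(-,2)$ commuting with filtered colimits, and use that singular homology does too); since $\Theta$ is natural it is enough to treat $A$ finitely generated. For such $A$ I would induct on the number of cyclic summands in a splitting $A=A_1\oplus A_2$. On the algebraic side $\Gamma(A_1\oplus A_2)\simeq\Gamma(A_1)\oplus(A_1\otimes_\z A_2)\oplus\Gamma(A_2)$, the middle summand being the image of $a_1\otimes a_2\mapsto[\iota_1a_1,\iota_2a_2]$. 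On the topological side $K(A_1\oplus A_2,2)=K(A_1,2)\times K(A_2,2)$, and once one knows $H_1(K(B,2),\z)=0$ and $H_3(K(B,2),\z)=0$, the K\"unneth formula gives $H_4(K(A,2),\z)\simeq H_4(K(A_1,2),\z)\oplus(A_1\otimes_\z A_2)\oplus H_4(K(A_2,2),\z)$ (using $H_2(K(A_i,2),\z)\simeq A_i$). A short computation with the product $H$-space structure then identifies $\Theta_A$ with $\Theta_{A_1}\oplus\id\oplus\Theta_{A_2}$ under these decompositions, so $\Theta_A$ is an isomorphism as soon as $\Theta_{A_1}$ and $\Theta_{A_2}$ are. (The needed vanishing $H_3(K(B,2),\z)=0$ is obtained by the same K\"unneth reduction to the cyclic case, where it follows from $H_*(\mathbb{CP}^\infty,\z)$ and from the Serre spectral sequence of the path fibration $K(\z/n,1)\arr\ast\arr K(\z/n,2)$ together with $H_2(K(\z/n,1),\z)=0$.)

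It then remains to prove that $\Theta$ is an isomorphism for cyclic $A$. For $A=\z$ this is immediate: $\Gamma(\z)$ is free of rank one on $\gamma(1)$, $H_4(\mathbb{CP}^\infty,\z)\simeq\z$, and $\gamma'(1)$ is a generator. For $A=\z/m$, factoring $m$ into prime powers and applying the previous step reduces us to $A=\z/p^k$. Here I would run the Serre spectral sequence of $K(\z/p^k,1)\arr\ast\arr K(\z/p^k,2)$, feeding in the known integral homology of the infinite lens space $K(\z/p^k,1)$ ($H_0=\z$, $H_{2i-1}=\z/p^k$, $H_{2i}=0$ for $i\geq1$), to find that $H_4(K(\z/p^k,2),\z)$ is cyclic of order $p^k$ for $p$ odd and $2^{k+1}$ for $p=2$. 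On the other hand the relations $\gamma(n\bar1)=n^2\gamma(\bar1)=0$ and $2n\gamma(\bar1)=[n\bar1,\bar1]=0$ (with $n=p^k$) show $\Gamma(\z/p^k)$ is cyclic of order dividing that number; hence it suffices to show that $\Theta$ is surjective, i.e.\ that $\gamma'(\bar1)$ together with the Pontryagin products $\{\bar1,\bar1\}$ generates $H_4(K(\z/p^k,2),\z)$, for then $\Theta$ is a surjection onto a group of the same order and therefore an isomorphism.

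The hard part is precisely this surjectivity: the universal property yields $\Theta$ but gives no control on the size of its image, and relating $\gamma'(\bar1)$ to the generator of $H_4$ coming out of the spectral sequence requires tracking the transgression (equivalently the homology suspension) and the divided power structure on the homology of the fibre through degrees $\leq4$. An acceptable alternative is to skip the entire argument above and simply invoke the classical computation of $H_*(K(A,2),\z)$ in degrees $\leq4$ due to Eilenberg and MacLane, which yields $H_3(K(A,2),\z)=0$ and $H_4(K(A,2),\z)\simeq\Gamma(A)$ directly.
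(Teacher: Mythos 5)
The paper's own ``proof'' of this proposition is a bare citation to Eilenberg--MacLane \cite[Theorem 21.1]{eilenberg-maclane1954}, which is exactly the fallback you offer in your closing sentence; read with that fallback, your proposal and the paper coincide. Your direct argument is a genuinely different and more informative route, and most of it is sound: the construction of the comparison map $\Theta\colon\Gamma(A)\arr H_4(K(A,2),\z)$ from the divided-power coalgebra structure of $H_*(\mathbb{CP}^\infty,\z)$ together with the universal property of $\Gamma$ is correct, as are the reduction to finitely generated groups via filtered colimits, the K\"unneth d\'evissage to cyclic groups (using $H_1=H_3=0$ and $H_2\simeq A$), and the upper bound that $\Gamma(\z/n)$ is cyclic of order dividing $n\gcd(2,n)$, coming from $\gamma(n\bar1)=n^2\gamma(\bar1)=0$ and $[n\bar1,\bar1]=2n\gamma(\bar1)=0$.

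The gap is the one you yourself name: the cyclic prime-power case is not actually carried out. The additive Serre spectral sequence of $K(\z/p^k,1)\arr \ast\arr K(\z/p^k,2)$ does not by itself deliver the answer. It exhibits $H_4(K(\z/p^k,2),\z)$ as an extension of $E^2_{2,1}\simeq\z/p^k$ by $E^4_{0,3}=\coker\bigl(d_3\colon E^3_{3,1}\arr E^3_{0,3}\bigr)$, and one must still show that $d_3$ is onto for $p$ odd, that its cokernel is $\z/2$ for $p=2$, that the resulting extension $0\arr\z/2\arr H_4\arr\z/2^k\arr 0$ is non-split, and that $\gamma'(\bar1)$ generates. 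All of this requires extra input beyond the additive spectral sequence --- the behaviour of the transgression on divided powers, or equivalently the Pontryagin square --- which is precisely the content of the Eilenberg--MacLane computation being cited. So as a self-contained proof the proposal is incomplete at exactly that step; with the citation it is complete, and is then the same proof as the paper's.
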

\begin{proof}
See \cite[Theorem 21.1]{eilenberg-maclane1954}
\end{proof}

\section{Tor-functor and third homology of abelian groups}

Let $A$ and $B$ be abelian groups. For any positive integer $n$ there is a natural homomorphism
\[
\tau_n:{}_nA\otimes_\z {}_nB \arr {}_n\tors (A,B).
\]
We denote the image of $a\otimes b$, under $\tau_n$ by $\tau_n(a,b)$.

For any pair of integers $s$ and $n$ such that $n=sm$, the maps $\tau_n$ are related by the 
commutative diagrams
\[
\begin{tikzcd}
 &{}_nA\otimes_\z {}_s B \ar[dl, " p_m\otimes \id"]\ar[dr, "\id\otimes i_m "] &\\
{}_sA \otimes_\z {}_s B \ar[dr, " \tau_s"] & &{}_nA \otimes_\z {}_nB,\ar[dl, "\tau_n "]\\
&{}_n\tors (A,B) &
\end{tikzcd}
\]
\[
\begin{tikzcd}
 &{}_sA\otimes_\z {}_n B \ar[dl, " \id\otimes p_m"]\ar[dr, "i_m\otimes \id "] &\\
{}_sA \otimes_\z {}_s B \ar[dr, " \tau_s"] & &{}_nA \otimes_\z {}_nB,\ar[dl, "\tau_n "]\\
&{}_n\tors (A,B) &
\end{tikzcd}
\]
in which $i_m:{}_sA \arr {}_nA$ and $p_m:{}_nA\arr {}_sA$ are the inclusion and 
the map induced by multiplication by $m$ respectively.
The commutativity of these diagrams expresses the relations
\[
\tau_n(a,b)=\tau_s(ma,b), \ \ \ \text{for $a\in {}_nA$ and $b\in {}_sB$},
\]
and 
\[
\tau_n(a',b')=\tau_s(a',mb'), \ \ \ \text{for $a'\in {}_sA$ and $b'\in {}_nB$}.
\]
The following proposition is well-known \cite[Proposition 3.5]{breen1999}.

\begin{prp}\label{tor-lim}
The induced map $\tau: \lim_{I}(_nA\otimes {}_n B) \arr \tors(A, B)$, where $I$ is the inductive system of 
objects ${}_nA\otimes_\z {}_n B$ determined by the above diagrams for varying $n$, is an isomorphism.
\end{prp}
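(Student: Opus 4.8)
The plan is to show that the evident presentation of the colimit $\lim_I({}_nA\otimes_\z {}_nB)$ \emph{is} MacLane's classical generators-and-relations presentation of $\tors(A,B)$, with $\tau$ being the comparison isomorphism. This bypasses any computation: the whole content is matching two presentations.

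First I would recall the classical description of $\tors(A,B)$: it is the abelian group on generators $\langle a,n,b\rangle$, one for every $n\geq 1$ and every $a\in {}_nA$, $b\in {}_nB$, subject to (i) bi-additivity in $a$ and in $b$ for each fixed $n$, (ii) $\langle a,n,b\rangle=\langle ma,s,b\rangle$ whenever $n=sm$ and $b\in {}_sB$, and (iii) $\langle a,n,b\rangle=\langle a,s,mb\rangle$ whenever $n=sm$ and $a\in {}_sA$. Under this description $\tau_n(a,b)=\langle a,n,b\rangle$; moreover each symbol $\langle a,n,b\rangle$ is annihilated by $n$ (add (i) to itself $n$ times, using $na=0$), so $\tors(A,B)$ equals $\bigcup_n\im(\tau_n)$, every element being a finite sum of symbols. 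I would cite this as standard (MacLane, Cartan--Eilenberg), or, since it is in the same circle of ideas as the proposition itself, take it as the defining presentation.

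Next I would write down the tautological presentation of the colimit. A colimit of a diagram of abelian groups is presented by the generators of all its objects modulo the relations of all its objects together with the identifications $x\sim F(\varphi)(x)$ along its morphisms $\varphi$. Here the objects are the diagonal groups ${}_nA\otimes_\z {}_nB$ and the auxiliary groups ${}_nA\otimes_\z {}_sB$, ${}_sA\otimes_\z {}_nB$ ($s\mid n$, $n=sm$) of the two displayed diagrams, each presented by elementary tensors with the bi-additivity relations, and the morphisms are $p_m\otimes\id$, $\id\otimes i_m$, $\id\otimes p_m$, $i_m\otimes\id$. Tracing an elementary tensor $a\otimes b$ of ${}_nA\otimes_\z {}_sB$ through $\id\otimes i_m$ and through $p_m\otimes\id$ identifies it with the elementary tensor $a\otimes b$ of ${}_nA\otimes_\z {}_nB$ and with $ma\otimes b$ of ${}_sA\otimes_\z {}_sB$; hence the auxiliary generators are redundant, and after discarding them (and doing the same for ${}_sA\otimes_\z {}_nB$ via $\id\otimes p_m$ and $i_m\otimes\id$) the colimit is presented by generators $(a\otimes b)_n$ ($a\in {}_nA$, $b\in {}_nB$, $n\geq 1$), bi-additivity at each level, and the relations $(a\otimes b)_n=(ma\otimes b)_s$ (asserted when $b\in {}_sB$) and $(a\otimes b)_n=(a\otimes mb)_s$ (asserted when $a\in {}_sA$), $n=sm$. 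Comparing with the previous paragraph, this is exactly MacLane's presentation under $(a\otimes b)_n\leftrightarrow\langle a,n,b\rangle$, and the induced isomorphism sends the class of $(a\otimes b)_n$ to $\langle a,n,b\rangle=\tau_n(a,b)$, so it coincides with $\tau$; thus $\tau$ is an isomorphism.

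The main obstacle here is bookkeeping rather than any single step: one must (a) extract the colimit's presentation correctly — the crucial point being that the auxiliary objects ${}_nA\otimes_\z {}_sB$ contribute no new generators, only the two change-of-level relations — and (b) match these against MacLane's relations while keeping track of the side conditions ($b\in {}_sB$, resp.\ $a\in {}_sA$) under which each is valid, since it is precisely these conditions that make the two lists agree. If one would rather not invoke the symbol presentation of Tor, an alternative is to reduce, using that both sides are additive in each variable, commute with filtered colimits, and depend only on the torsion subgroups, to the case $A=\z/a$, $B=\z/b$ finite cyclic, where $\tors(\z/a,\z/b)\simeq\z/\gcd(a,b)$ and ${}_n(\z/a)\otimes_\z {}_n(\z/b)\simeq\z/\gcd(a,b,n)$, and then analyze this very explicit colimit directly; the delicate point in that route is that $\tau_n$ is \emph{not} an isomorphism for an arbitrary multiple $n$ of $\gcd(a,b)$, so one has to identify which levels actually contribute to the colimit.
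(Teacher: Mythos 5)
Your argument is correct, but note that the paper does not actually prove Proposition \ref{tor-lim} at all: it simply labels it ``well-known'' and cites Breen (1999), Proposition 3.5. So your proposal supplies a proof where the paper supplies a reference. The route you take --- identifying the tautological generators-and-relations presentation of the colimit with MacLane's triple-symbol presentation of $\tors(A,B)$ --- is sound: the off-diagonal objects ${}_nA\otimes_\z {}_sB$ and ${}_sA\otimes_\z {}_nB$ are sources of spans only, so their generators are eliminable and contribute exactly the two slide relations $(a\otimes b)_n=(ma\otimes b)_s$ (for $b\in{}_sB$) and $(a\otimes b)_n=(a\otimes mb)_s$ (for $a\in{}_sA$), which together with bi-additivity at each level is precisely the classical presentation of $\tors(A,B)$ by symbols $\langle a,n,b\rangle$. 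The only point you lean on without proof is the identification $\tau_n(a,b)=\langle a,n,b\rangle$, i.e.\ that the paper's homologically defined natural map ${}_nA\otimes_\z{}_nB\arr\tors(A,B)$ is the symbol map; this is part of the standard MacLane/Cartan--Eilenberg package and is fair to cite, but it is the one place where your ``pure bookkeeping'' claim hides actual content. What your approach buys over the paper's is self-containedness and an explicit handle on which levels $n$ contribute (your closing remark that $\tau_n$ itself need not be injective or surjective for a given $n$ is exactly the pitfall a careless reader would fall into); what the citation buys is brevity and naturality statements (functoriality in $A$ and $B$, compatibility with the involutions used later) that come packaged with Breen's formulation.
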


Let $\sigma_0: A\otimes B\arr B\otimes A$ and $\sigma_1:\tors (A,B)\arr \tors (B,A)$
be induced by interchanging the groups $A$ and $B$. It is well known that the diagram
\[
\begin{tikzcd}
{}_nA\otimes_\z {}_nB \ar[r," \sigma_0"]\ar[d, "\tau_n "] & {}_nB \otimes_\z {}_nA \ar[d, "\tau_n' "]\\
{}_n\tors (A,B)\ar[r, " -\sigma_1"] &{}_n\tors (B,A)
\end{tikzcd}
\]
commutes.
By passing to the inductive limit, the same is true for the diagram
\[
\begin{tikzcd}
\lim_I({}_nA\otimes_\z {}_nB) \ar[r," \sigma_0"]\ar[d, "\tau "] & \lim_I({}_nB \otimes_\z {}_nA) \ar[d, "\tau' "]\\
\tors (A,B)\ar[r, " -\sigma_1"] &\tors (B,A).
\end{tikzcd}
\]
It is useful to observe that the map $\sigma_1:\tors (A,B)\arr \tors (B,A)$ is indeed induced by 
the involution $A\otimes_\z B \arr B\otimes_\z A$ given by $a\otimes b\mapsto -b\otimes a$
and therefore $-\sigma_1$ is induced by the involution $a\otimes b\mapsto b\otimes a$

Let $\Sigma_2$ be the symmetric group of order 2. For an abelian group $A$, $\Sigma_2$ acts on 
$A\otimes_\z A$ and $\tors(A,A)$, through $\sigma_0$ and $\sigma_1$.
Let us denote the symmetric group by $\Sigma_2^\varepsilon$, 
rather than simply by $\Sigma_2$, when it
acts on $\tors(A,A)$ as 
\[
(\sigma^\varepsilon, x)\mapsto -\sigma_1(x).
\]

We need the following well-known lemma on the third homology of abelian groups
 \cite[Lemma~5.5]{suslin1991}, \cite[Section 6]{breen1999}. 

\begin{prp}\label{H3A}
For any abelian group $A$ we have the exact sequence
\[
\begin{array}{c}
0 \arr \bigwedge_\z^3 A \arr H_3(A,\z) \arr \tors(A,A)^{\Sigma_2^\varepsilon} \arr 0,
\end{array}
\]
where the right side homomorphism  is obtained from the composition
\[
H_3(A,\z) \overset{{\Delta_A}_\ast}{\larr } H_3(A\times A,\z) \arr \tors(A,A),
\]
$\Delta_A$ being the diagonal map $A \arr A\times A$, $a \mt (a,a)$.
\end{prp}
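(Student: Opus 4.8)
The plan is to reduce first to finitely generated $A$ and then, via the K\"unneth theorem, to a decomposition of $A$ into cyclic summands; the cyclic cases are treated by hand and reassembled, the crux being to identify the right-hand map with the \emph{specific} homomorphism $\rho_A$ induced by the diagonal and to prove it surjective onto the twisted invariants with kernel exactly the image of the exterior product. For the first reduction, every abelian group is the filtered colimit of its finitely generated subgroups, and each of $\bigwedge_\z^3(-)$, $H_3(-,\z)$ (group homology commutes with filtered colimits) and $\tors(-,-)$ commutes with filtered colimits; since $\Sigma_2$ is finite so does $(-)^{\Sigma_2^\varepsilon}$, filtered colimits commuting with finite limits, and the three maps in the sequence are natural transformations. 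Hence it suffices to treat finitely generated $A$.

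Next I realize the right-hand map naturally. The K\"unneth theorem applied to a CW model of $K(A,1)\times K(A,1)=K(A\times A,1)$ gives a natural short exact sequence
\[
0 \arr \bigoplus_{p+q=n} H_p(A,\z)\otimes_\z H_q(A,\z) \arr H_n(A\times A,\z) \arr \bigoplus_{p+q=n-1}\tors(H_p(A,\z), H_q(A,\z)) \arr 0 ,
\]
and since $H_0(A,\z)=\z$, $H_1(A,\z)=A$ and $\tors(\z,-)=0$, in degree $n=3$ the right-hand term is canonically $\tors(A,A)$, a \emph{natural} quotient of $H_3(A\times A,\z)$. Let $\rho_A$ be the composite $H_3(A,\z)\xrightarrow{{\Delta_A}_\ast}H_3(A\times A,\z)\arr\tors(A,A)$, which is the map of the Proposition. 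Because $\Delta_A$ is a group homomorphism, ${\Delta_A}_\ast$ respects Pontryagin products; hence it carries the degree-$3$ component of the subring of $H_*(A,\z)$ generated by $H_1$ — which is the image of the exterior (Pontryagin) product $\mu\colon\bigwedge_\z^3 A\arr H_3(A,\z)$ — into the left-hand (natural) subgroup of the K\"unneth sequence for $A\times A$, so $\rho_A\circ\mu=0$. Moreover, writing $\theta\colon A\times A\arr A\times A$ for the flip, we have $\theta\circ\Delta_A=\Delta_A$, and $\theta_\ast$ induces on the $\tors(A,A)$-quotient the involution attached to $a\otimes b\mapsto b\otimes a$ (the transposition of the two degree-$1$ Tor-factors, with the K\"unneth sign $(-1)^{1\cdot1}$); by the remark preceding the Proposition this is $-\sigma_1$. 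Therefore $\im(\rho_A)\subseteq\tors(A,A)^{\Sigma_2^\varepsilon}$, and we have a complex $\bigwedge_\z^3 A\xrightarrow{\mu}H_3(A,\z)\xrightarrow{\rho_A}\tors(A,A)^{\Sigma_2^\varepsilon}$.

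It remains to prove exactness, and here I decompose $A=C_1\oplus\cdots\oplus C_k$ into cyclic groups, using $H_*(\z,\z)=\bigwedge_\z^*\z$ and, for $C$ finite cyclic of order $n$, $H_0(C,\z)=\z$, $H_1(C,\z)\cong C$, $H_{2i}(C,\z)=0$ and $H_{2i+1}(C,\z)\cong C$ for $i\ge1$. Iterating the K\"unneth theorem and using $\bigwedge_\z^2 C_i=0$ and $\tors(\z,-)=0$, one obtains an isomorphism
\[
H_3(A,\z)\cong\bigoplus_i H_3(C_i,\z)\ \oplus\ \bigoplus_{i<j<l}C_i\otimes_\z C_j\otimes_\z C_l\ \oplus\ \bigoplus_{i<j}\tors(C_i,C_j)
\]
under which $\mu$ is the (split) inclusion of the middle summand, which is all of $\bigwedge_\z^3 A$ since any term with a repeated cyclic index vanishes; in particular $\mu$ is injective. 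On the invariants side, $\tors(A,A)^{\Sigma_2^\varepsilon}\cong\bigoplus_i\tors(C_i,C_i)\ \oplus\ \bigoplus_{i<j}\tors(C_i,C_j)$, the first summand because the involution exchanging the two copies of $C_i$ fixes $\tau_n(1,1)$, the second sitting in $\tors(C_i,C_j)\oplus\tors(C_j,C_i)$ via $x\mapsto(x,-\sigma_1 x)$. Tracing $\rho_A$ through these presentations — using for finite $C_i$ the classical isomorphism $H_3(\z/n,\z)\cong\z/n=\tors(\z/n,\z/n)$ and noting that summands with $C_i=\z$ are zero — shows that $\rho_A$ maps $H_3(C_i,\z)$ isomorphically onto $\tors(C_i,C_i)^{\Sigma_2^\varepsilon}$, maps each $\tors(C_i,C_j)$ with $i<j$ isomorphically onto its image in $\tors(A,A)^{\Sigma_2^\varepsilon}$, and kills $\bigwedge_\z^3 A$. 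Hence $\rho_A$ is surjective with kernel $\mu(\bigwedge_\z^3 A)$, which with the complex of the previous paragraph yields the asserted exact sequence.

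The routine parts are the colimit reduction and the bare K\"unneth bookkeeping; the genuine obstacle is twofold. First, the sign computation identifying $\theta_\ast$ on the Tor-term with $-\sigma_1$. Second, the verification that the pieces computed abstractly above are matched by the \emph{specific} map $\rho_A$ and not merely by a count of orders — that is, controlling how the non-natural K\"unneth splittings interact with ${\Delta_A}_\ast$ and with $\theta_\ast$, so that the internal identification $H_3(C_i,\z)\cong\tors(C_i,C_i)$ and the diagonal embedding $\tors(C_i,C_j)\hookrightarrow\tors(C_i,C_j)\oplus\tors(C_j,C_i)$ are the ones actually realized by $\rho_A$. One can shortcut much of this by invoking the known functorial computation of $H_*(A,\z)$ and reading off the degree-$3$ term together with its diagonal description, which is essentially the route of \cite[Lemma~5.5]{suslin1991} and \cite[Section~6]{breen1999}.
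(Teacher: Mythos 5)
The paper offers no proof of this proposition at all: it is quoted as a known result with pointers to Suslin (Lemma 5.5) and Breen (Section 6). Your proposal is therefore not "the same approach" but an actual argument, and it is a sound one: the colimit reduction is fine (all four functors, including $(-)^{\Sigma_2^\varepsilon}$, commute with filtered colimits), the identification of the right-hand map with the K\"unneth projection composed with ${\Delta_A}_\ast$ is exactly what the statement intends, the primitivity argument showing $\rho_A\circ\mu=0$ is correct, and the block-diagonal bookkeeping over a cyclic decomposition does reduce everything to (i) the sign identifying the flip-induced involution on the Tor term with $-\sigma_1$ (which matches the convention the paper sets up just before the proposition) and (ii) the single computation that $H_3(\z/n,\z)\arr\tors(\z/n,\z/n)$ is an isomorphism and not merely a map between two copies of $\z/n$. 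One refinement worth making explicit in step (ii) of your bookkeeping: the composite $C_i\times C_j \arr A \overset{\Delta_A}{\arr} A\times A$ is not itself a product map, so naturality of the K\"unneth sequence cannot be applied to it directly; you recover the component of $\rho_A$ in $\tors(C_i,C_j)$ by post-composing with the product map $p_i\times p_j$, for which the total composite is the identity of $C_i\times C_j$ and naturality does apply. With that, your argument is complete except for the deferred cyclic computation, which is precisely the content of the references the paper cites (and is classical, via the periodic resolution of $\z$ over $\z[\z/n]$). In short: your route is a legitimate self-assembled proof where the paper simply cites; its cost is that the honest core of the matter --- the sign and the cyclic case --- still has to be imported from, or re-derived along the lines of, Eilenberg--MacLane/Breen.
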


\section{The kernel of  \texorpdfstring{$\Psi: \Gamma(A) \arr A\otimes A$}{Lg}}

We study the kernel of $\Psi:\Gamma(A)\arr A\otimes_\z A$. If $\Theta=[\ ,\ ]:A\otimes_\z A\arr \Gamma(A)$, then
from the commutative diagram
\[
\begin{tikzcd}
0\ar[r] &\ker(\Theta) \ar[r]\ar[d] & A\otimes_\z A \ar[r, "\Theta"]\ar[d, "\Theta"] & \im(\Theta) \ar[d, "\gamma"] \lar[r]&0\\
0\ar[r] &\ker(\Psi) \ar[r]& \Gamma(A)\ar[r, "\gamma"] &A\otimes_\z A & 
\end{tikzcd}
\]
and exact sequence (\ref{exact1}) we obtain the exact sequence
\[
\ker(\Psi) \arr A/2 \overset{\delta}\arr (A\otimes_\z A)_{\Omega_2} \arr H_2(A,\z) \arr 0,
\]
where $(A\otimes_\z A)_{\Omega_2}=(A\otimes_\z A)/ \langle a\otimes b + b\otimes a | a,b \in A\rangle$
and $\delta(\overline{a})= \overline{a\otimes a}$.
But the sequence 
\[
0\arr A/2 \arr (A\otimes_\z A)_{\Omega_2} \arr H_2(A,\z) \arr 0
\]
is always exact. Thus the map $\ker(\Psi) \arr A/2$ is trivial, which shows that 
\[
\ker\Big(\Gamma(A)\overset{\Psi}{\larr} A\otimes_\z A\Big)\se 
\im\Big(A\otimes_\z A \overset{[\ ,\ ]}{\larr} \Gamma(A)\Big).
\]

We give a precise description of the kernel of $\Psi$.

\begin{thm}\label{H4}
For any abelian group $A$, we have the exact sequence
\[
0\arr H_1(\Sigma_2^\varepsilon, \tors({}_{2^\infty}A,{}_{2^\infty}A)) \arr 
\Gamma(A) \overset{\Psi}{\arr} A\otimes_\z A \arr H_2(A,\z)\arr 0.
\]
\end{thm}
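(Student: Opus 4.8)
Since the cokernel statement is already in hand ($\coker\Psi\simeq A\wedge A\simeq H_2(A,\z)$), exactness at $A\otimes_\z A$ is automatic and the whole problem is to produce a \emph{natural} isomorphism $\ker\Psi\simeq H_1(\Sigma_2^\varepsilon,\tors({}_{2^\infty}A,{}_{2^\infty}A))$. There are two useful handles on $\ker\Psi$. The elementary one refines the computation preceding the statement: since $\ker\Psi\se\im[\ ,\ ]$, and $\Psi\circ[\ ,\ ]$ is $a\otimes b\mt a\otimes b+b\otimes a=(1+\omega)(a\otimes b)$ while $\ker[\ ,\ ]=\langle a\otimes b-b\otimes a\rangle=\im(1-\omega)\se\ker(1+\omega)$, a diagram chase gives the natural identification
\[
\ker\Psi\ \simeq\ \ker\!\bigl(1+\omega\colon A\otimes_\z A\arr A\otimes_\z A\bigr)\big/\im(1-\omega),
\]
i.e.\ the Tate cohomology group $\hat H^{-1}(\Sigma_2,A\otimes_\z A)$ for $\Sigma_2=\{\id,\omega\}$ acting by the flip. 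The second handle uses Proposition~\ref{KA2}, $\Gamma(A)\simeq H_4(K(A,2),\z)$.

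For the second route I would run the homology Serre spectral sequence of the path--loop fibration $K(A,1)\arr PK(A,2)\arr K(A,2)$, which converges to $0$ in positive total degree and has $E^2_{p,q}=H_p\bigl(K(A,2);H_q(A,\z)\bigr)$. In the relevant corner one computes $E^2_{4,0}=\Gamma(A)$, $E^2_{2,1}=A\otimes_\z A$, $E^2_{1,2}=0$, $E^2_{2,2}=A\otimes_\z\bigwedge_\z^2 A$, $E^2_{3,1}=\tors(A,A)$ and $E^2_{0,3}=H_3(A,\z)$. Identifying the transgression-type differential $d_2\colon E^2_{4,0}=\Gamma(A)\arr E^2_{2,1}=A\otimes_\z A$ with $\Psi$ and chasing---$E^2_{1,2}=0$ forces $d_3$ out of $E_{4,0}$ to vanish, while everything in the bottom row and in the first column must die---one gets $\ker\Psi=E^4_{4,0}$ and that $d_4$ induces an isomorphism $\ker\Psi\xrightarrow{\ \sim\ }E^4_{0,3}$, a canonical subquotient of $H_3(A,\z)$.

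Now comes the heart. By Proposition~\ref{H3A} there is a natural exact sequence $0\arr\bigwedge_\z^3 A\arr H_3(A,\z)\arr\tors(A,A)^{\Sigma_2^\varepsilon}\arr0$, and one must check that $\im\bigl(d_2\colon A\otimes_\z\bigwedge_\z^2 A\arr H_3(A,\z)\bigr)=\bigwedge_\z^3 A$ and that $d_3$---whose source is a subquotient of $E^2_{3,1}=\tors(A,A)$---maps, through the projection onto $\tors(A,A)^{\Sigma_2^\varepsilon}$, exactly onto the norm submodule $(1+\sigma^\varepsilon)\tors(A,A)$, so that
\[
\ker\Psi\ \simeq\ \tors(A,A)^{\Sigma_2^\varepsilon}\big/(1+\sigma^\varepsilon)\tors(A,A)\ =\ H_1\!\bigl(\Sigma_2^\varepsilon,\tors(A,A)\bigr);
\]
finally $H_1(\Sigma_2^\varepsilon,\tors(A,A))=H_1(\Sigma_2^\varepsilon,\tors({}_{2^\infty}A,{}_{2^\infty}A))$ because $H_1(\Sigma_2^\varepsilon,-)$ is $2$-torsion and only sees the $2$-primary summand $\tors({}_{2^\infty}A,{}_{2^\infty}A)$ of $\tors(A,A)$. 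To pin down these differentials (and to double-check the statement) I would reduce to small groups: $\Gamma$, $-\otimes_\z-$, $\tors$, $H_\ast(\Sigma_2,-)$, $\bigwedge^\bullet$, $H_\ast(-,\z)$ and $(-)_{2^\infty}$ all commute with filtered colimits and $\Psi$ is natural, so it suffices to treat $A$ finitely generated; writing $A=\z^r\oplus T$ with $T$ finite, the free part contributes nothing to $\ker\Psi$ (it maps injectively into $A\otimes_\z A$) and everything splits over the primary decomposition $T=\bigoplus_p T_p$ (cross terms $T_p\otimes T_q$ vanish for $p\neq q$, and $\Gamma(T_p)$ is $2$-torsion-free for $p$ odd), leaving $A=\bigoplus_i\z/2^{k_i}$. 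There $\Gamma(A)=\bigoplus_i\z/2^{k_i+1}\oplus\bigoplus_{i<j}\z/2^{\min(k_i,k_j)}$, $\Psi$ is injective on the cross summands and is the surjection $\z/2^{k_i+1}\two\z/2^{k_i}=\z/2^{k_i}\otimes_\z\z/2^{k_i}$ on the $i$-th diagonal summand, giving $\ker\Psi\simeq\bigoplus_i\z/2$; dually $\tors(A,A)=\bigoplus_{i,j}\tors(\z/2^{k_i},\z/2^{k_j})$ has its off-diagonal summands swapped in pairs by $\sigma^\varepsilon$ (an induced $\Sigma_2$-module, so $H_1$ vanishes) and $\sigma^\varepsilon$ acts trivially on each diagonal summand $\tors(\z/2^{k_i},\z/2^{k_i})\simeq\z/2^{k_i}$ (by the $\tau_n$-description of Proposition~\ref{tor-lim}, since the flip $\sigma_0$ fixes $1\otimes1$), so $H_1(\Sigma_2^\varepsilon,\tors(A,A))\simeq\bigoplus_i\z/2^{k_i}/2\z/2^{k_i}=\bigoplus_i\z/2$, matching.

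The step I expect to be the genuine obstacle is upgrading these matching computations to a \emph{natural} isomorphism: the finite-$2$-group computation goes through a non-canonical direct-sum decomposition, so passing to the colimit requires that the isomorphisms be compatible with the transition maps. This is precisely what the spectral-sequence picture supplies, since every object and map in it is functorial---at the price of having to identify $d_2\colon\Gamma(A)\arr A\otimes_\z A$ with $\Psi$ and to determine $\im(d_2\colon A\otimes_\z\bigwedge_\z^2 A\arr H_3(A,\z))$ and $\im d_3$ exactly. An alternative is to hand-build a natural comparison map $\hat H^{-1}(\Sigma_2,A\otimes_\z A)\arr H_1(\Sigma_2^\varepsilon,\tors({}_{2^\infty}A,{}_{2^\infty}A))$ out of the maps $\tau_n$ of Proposition~\ref{tor-lim} and then verify, via the reduction above, that it is an isomorphism. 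Either way it is this identification, not the bookkeeping, that carries the weight of the proof.
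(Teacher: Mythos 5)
Your skeleton is the paper's: the Serre spectral sequence of the path--loop fibration $K(A,1)\arr \ast \arr K(A,2)$, Proposition~\ref{H3A}, and the identification of the remaining differential out of $E_{3,1}=\tors(A,A)$ with the norm map. The chase you describe (using $E^2_{1,2}=0$, identifying $d_2$ on $E_{4,0}$ with $\Psi$, and $\im\bigl(d_2\colon A\otimes_\z\bigwedge^2_\z A\arr H_3(A,\z)\bigr)=\bigwedge^3_\z A$) is exactly how the paper arrives at $\ker\Psi\simeq \tors(A,A)^{\Sigma_2^\varepsilon}/(\text{image of a map from }\tors(A,A))$. But the step you yourself flag as ``the heart'' --- that this map hits exactly the norm submodule $(\id+\sigma^\varepsilon)\tors(A,A)$ --- is left unproven, and your proposed fallback does not close it. Checking on $A=\bigoplus_i\z/2^{k_i}$ only verifies that the two sides are abstractly isomorphic; it neither identifies the differential nor produces the natural map needed to pass to colimits, as you concede. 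Worse, the cyclic test is genuinely too weak here: for $A=\z/2^k$ the diagonal summand $\tors(\z/2^k,\z/2^k)\simeq\z/2^k$ yields $H_1\simeq\z/2$ whether the involution is taken to be $\sigma_1$ or $-\sigma_1$, and the off-diagonal summands are induced modules either way, so no computation with direct sums of cyclic groups can even pin down the sign of the action, let alone the differential.

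The paper closes precisely this gap with two moves absent from your proposal. First, instead of the bare path--loop fibration it uses the morphism of fibrations induced by the morphism of extensions from $A\tail A\times A\two A$ to $A\tail A\two\{1\}$; this realizes the relevant map into $H_3(A,\z)$ as $(\Delta_A\circ\mu)_*$, where $\mu\colon A\times A\arr A$ is the multiplication --- a concrete, natural map of spaces rather than an abstract differential. Second, it computes the effect of $\Delta_A\circ\mu$ on $\tors(A,A)$ at the chain level: taking a free resolution $0\arr F_1\arr F_0\arr A\arr 0$, it lifts $a\otimes b\mapsto a\otimes b+b\otimes a$ to an explicit morphism of the tensor-product complexes and reads off that the induced map on $\tors(A,A)$ is $\id+\sigma^\varepsilon$, with $\sigma^\varepsilon=-\sigma_1$ accounting for the Koszul sign in $\partial_2$. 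That chain-level computation is the content your proposal is missing. Your elementary observation that $\ker\Psi\simeq\ker(1+\omega)/\im(1-\omega)$ is correct and is a genuinely different (and attractive) starting point, but as you note it only relocates the same difficulty to constructing a natural comparison map to $\tors$ via the $\tau_n$, which you likewise do not carry out.
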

\begin{proof}
If $A \tail B \two C$ is an extension of abelian groups, then
standard classifying space theory gives a (homotopy theoretic) fibration
of Eilenberg-MacLane spaces $K(A,1) \arr K(B,1) \arr K(C,1)$.
From this we obtain the fibration \cite[Lemma 3.4.2]{may-ponto2012}
\[
K(B,1) \arr K(C,1) \arr K(A,2).
\]

For the group $A$, the morphism of extensions
\[
\begin{tikzcd}
A \ar[d]\ar[r, tail, "i_1"] & A \times A \ar[d, "\mu"] 
\ar[r, two heads, "p_2"] & A \ar[d]\\\
A \ar[r, tail, "="]  & A  \ar[r, two heads] & \{1\},
\end{tikzcd}
\]
where $i_1(a)=(a,1)$, $p_2(a,b)=b$ and $\mu(a,b)=ab$, induces the morphism of
fibrations
\[
\begin{tikzcd}
K(A \times A, 1) \ar[r] \ar[d] & K(A,1) \ar[r] \ar[d] & K(A,2)\ar[d]\\
K(A, 1) \ar[r] & K(\{1\},1) \ar[r] & K(A,2).
\end{tikzcd}
\]
By analysing the Serre spectral sequences associated to this morphism of 
fibrations, we obtain the exact sequence
\[
0\arr  \ker(\Psi) \arr H_4(K(A,2)) \overset{\Psi}{\arr} 
A\otimes_\z A \arr H_2(A)\arr 0,
\]
where
\[
\ker(\Psi)\simeq H_3(A,\z)/\mu_\ast(A\otimes_\z H_2(A,\z) \oplus \tors(A,A)).
\]
By Proposition  \ref{H3A} we have the exact sequence
\[
\begin{array}{c}
0 \arr \bigwedge_\z^3 A \arr H_3(A,\z) \arr \tors(A,A)^{\Sigma_2^\varepsilon} \arr 0.
\end{array}
\]
Clearly $\mu_\ast(A\otimes_\z H_2(A,\z))\se \bigwedge_\z^3 A$.
Therefore
\[
\ker(\Psi)\simeq \tors(A,A)^{\Sigma_2^\varepsilon}/{(\Delta_A\circ\mu)}_\ast(\tors(A,A)).
\]
We prove that the map 
$\Delta\circ \mu:A\times A \arr A \times A$, which is given by
$(a,b)\mapsto (ab,ab)$, induces the map
\[
\id+\sigma^\varepsilon: \tors(A,A) \arr \tors(A,A).
\]
By studying the map 
$(\Delta\circ \mu)_\ast: H_2(A\times A,\z) \arr H_2(A\times A,\z)$ using the fact that
$A\otimes A\simeq H_2(A\times A,\z)/(H_2(A,\z)\oplus H_2(A,\z))$ (the K\"unneth Formula),
one sees that $\Delta\circ \mu$ induces the map
\[
A\otimes A \arr A \otimes A, \ \ \ a\otimes b \mapsto a\otimes b-b\otimes a,
\] 
Thus to study the induced map on $\tors(A,A)$ by  $\Delta\circ \mu$ we should study
the map induced on $\tors(A,A)$ by the map 
\[
A\otimes A \arr A \otimes A, \ \ \ a\otimes b \mapsto a\otimes b+b\otimes a=(\id+\iota)(a\otimes b),
\]
where $\iota: A\otimes A \arr A \otimes A$ is given by $a\otimes b\mt b\otimes a$. 
Let
\[
0 \arr F_1 \overset{\partial}{\larr} F_0 \overset{\epsilon}{\larr} A \arr 0
\]
be a free resolution of $A$. Then the sequence
\[
0 \arr F_1 \otimes F_1 
\overset{\partial_2}{\larr} 
F_0 \otimes F_1 \oplus F_1 \otimes F_0 
\overset{\partial_1}{\larr} 
F_0 \otimes F_0
\arr 0
\]
can be used to calculate $\tors(A,A)$, where
$\partial_2=(\partial\otimes \id_{F_1}, -\id_{F_1}\otimes \partial)$,
$\partial_1=\id_{F_0}\otimes \partial +\partial\otimes \id_{F_0}$. The map
$\id+\iota: A\otimes A \arr A\otimes A$ can be extended to the morphism of complexes
\[
\begin{tikzcd}
0 \larr  \!\!\!\!\!\!\!\!\!\!\!\!\!\!\!\! &F_1 \otimes F_1 \ar[r, "\partial_2"] 
\ar[d, "f_2"] & 
F_0 \otimes F_1 \oplus F_1 \otimes F_0 \ar[r,"\partial_1"] \ar[d, "f_1"] & 
F_0 \otimes F_0 \ar[d, "f_0"]& \!\!\!\!\!\!\!\!\!\!\!\!\!\!\!\!\larr 0 \\
0 \larr  \!\!\!\!\!\!\!\!\!\!\!\!\!\!\!\! &F_1 \otimes F_1 \ar[r, "\partial_2"] & 
F_0 \otimes F_1 \oplus F_1 \otimes F_0 \ar[r,"\partial_1"]& 
F_0 \otimes F_0 & \!\!\!\!\!\!\!\!\!\!\!\!\!\!\!\!\larr 0,
\end{tikzcd}
\]
where
\begin{align*}
&f_0(x\otimes y):= x\otimes y +y\otimes x, \\
&f_1(x\otimes y, y'\otimes x'):=(x\otimes y+x'\otimes y', y\otimes x+y'\otimes x'),\\
&f_2(x\otimes y):=x\otimes y-y\otimes x. 
\end{align*}
Since 
\[
f_1(x\otimes y, y'\otimes x')=(x\otimes y, y'\otimes x')+(x'\otimes y', y\otimes x),
\]
$\Delta\circ \mu$ induces the map $\id +\sigma^\varepsilon:\tors(A,A) \arr \tors(A,A)$.
Therefore
\[
\ker(\Psi)\simeq \tors(A,A)^{\Sigma_2^\varepsilon}/(\id+\sigma^\varepsilon)(\tors(A,A))=
H_1(\Sigma_2^\varepsilon, \tors(A,A)).
\]

Finally since $\tors(A,A)=\tors(A_{\rm T},A_{\rm T})$, $A_{\rm T}$ being
the subgroup of torsion elements of $A$, and since for any torsion abelian 
group $B$, $B \simeq \bigoplus_{p \ {\rm prime}} {}_{p^\infty}B$,
we have the isomorphism
\[
H_1(\Sigma_2, \tors(A,A))\simeq H_1(\Sigma_2, \tors({}_{2^\infty}A,{}_{2^\infty}A)).
\]
This completes the proof of the theorem.
\end{proof}

\begin{cor}
For any abelian group $A$, we have the exact sequence
\[
\begin{array}{c}
0\arr \lim_{I}H_1(\Sigma_2,{}_{2^n}A\otimes_\z {}_{2^n} A) \arr 
\Gamma(A) \overset{\Psi}{\arr} A\otimes_\z A \arr H_2(A,\z)\arr 0.
\end{array}
\]
In particular if ${}_{2^\infty}A$ is finite then we have the exact sequence
\[
\begin{array}{c}
0\arr H_1(\Sigma_2,{}_{2^\infty}A\otimes_\z {}_{2^\infty} A) \arr 
\Gamma(A) \overset{\Psi}{\arr} A\otimes_\z A \arr H_2(A,\z)\arr 0.
\end{array}
\]
\end{cor}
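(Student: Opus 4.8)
The plan is to deduce this from Theorem~\ref{H4} by rewriting the group $H_1(\Sigma_2^\varepsilon,\tors({}_{2^\infty}A,{}_{2^\infty}A))$ as an inductive limit. Put $B:={}_{2^\infty}A$, so that ${}_{2^n}B={}_{2^n}A$ and $\tors(B,B)=\tors({}_{2^\infty}A,{}_{2^\infty}A)$. By Proposition~\ref{tor-lim}, the map $\tau$ identifies $\tors(B,B)$ with $\lim_I({}_nB\otimes_\z {}_nB)$, the limit being taken over the system described in Section~2. Since $B$ is $2$-primary, ${}_nB$ depends only on the $2$-part of $n$, so this system reduces to the sequential subsystem $\{{}_{2^n}A\otimes_\z {}_{2^n}A\}_{n\ge 0}$, and hence $\tors({}_{2^\infty}A,{}_{2^\infty}A)\simeq\lim_I({}_{2^n}A\otimes_\z {}_{2^n}A)$. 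Furthermore, taking both arguments equal to $B$ in the commutative square of Section~2 relating $\sigma_0$ and $-\sigma_1$, we see that $\tau$ is $\Sigma_2$-equivariant once ${}_{2^n}A\otimes_\z {}_{2^n}A$ is given the ordinary swap action $\sigma_0$ and $\tors(B,B)$ the action $\sigma^\varepsilon=-\sigma_1$ of $\Sigma_2^\varepsilon$.

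The main ingredient is that group homology commutes with filtered inductive limits. Indeed, if $P_\bullet\to\z$ is a projective resolution of $\z$ over $\z[\Sigma_2]$, then both $P_\bullet\otimes_{\z[\Sigma_2]}(-)$ and the homology functors preserve filtered colimits of $\Sigma_2$-modules, so $H_n(\Sigma_2,\lim_I M_i)\simeq\lim_I H_n(\Sigma_2,M_i)$ for every filtered system $\{M_i\}$ of $\Sigma_2$-modules. Applying this with $M_n={}_{2^n}A\otimes_\z {}_{2^n}A$, equipped with the swap action, and using the equivariant identification above, we obtain
\[
H_1(\Sigma_2^\varepsilon,\tors({}_{2^\infty}A,{}_{2^\infty}A))\simeq H_1\!\bigl(\Sigma_2,\lim_I({}_{2^n}A\otimes_\z {}_{2^n}A)\bigr)\simeq\lim_I H_1(\Sigma_2,{}_{2^n}A\otimes_\z {}_{2^n}A).
\]
Substituting this isomorphism into the exact sequence of Theorem~\ref{H4} gives the first displayed sequence.

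For the final assertion, assume ${}_{2^\infty}A$ is finite. Then $2^N\cdot{}_{2^\infty}A=0$ for some $N$, hence ${}_{2^n}A={}_{2^\infty}A$ for all $n\ge N$ and the system $\{{}_{2^n}A\otimes_\z {}_{2^n}A\}$ is constant, with identity structure maps, from the $N$-th term on; equivalently, one checks on cyclic summands that $\tau_{2^N}\colon{}_{2^\infty}A\otimes_\z {}_{2^\infty}A\arr\tors({}_{2^\infty}A,{}_{2^\infty}A)$ is already an isomorphism. Either way $\lim_I H_1(\Sigma_2,{}_{2^n}A\otimes_\z {}_{2^n}A)\simeq H_1(\Sigma_2,{}_{2^\infty}A\otimes_\z {}_{2^\infty}A)$, which yields the second sequence.

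The only point requiring care is the passage, in the first paragraph, from the inductive system of Section~2 to the sequential system indexed by the powers of $2$, together with the check that the ordinary swap is compatible with its structure maps; once this is in place the rest of the argument is purely formal, since filtered colimits are exact and commute with homology.
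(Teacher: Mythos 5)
Your overall route --- substitute an inductive-limit description of $H_1(\Sigma_2^\varepsilon,\tors({}_{2^\infty}A,{}_{2^\infty}A))$ into Theorem~\ref{H4} by way of Proposition~\ref{tor-lim} --- is exactly the one the paper intends (its proof is the single sentence that the corollary follows from those two results). But there is a genuine gap in how you handle the inductive system. The system $I$ of Proposition~\ref{tor-lim} is \emph{not} a directed system of the groups ${}_nA\otimes_\z{}_nA$ admitting transition maps ${}_sA\otimes_\z{}_sA\arr{}_nA\otimes_\z{}_nA$: the identifications pass through the auxiliary objects ${}_nA\otimes_\z{}_sA$ via the spans $(p_m\otimes\id,\ \id\otimes i_m)$, and in general no map between the diagonal terms compatible with the $\tau$'s exists. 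For $A=\z/2$ one computes that $\tau_2$ is an isomorphism onto $\tors(\z/2,\z/2)=\z/2$ while $\tau_4=0$, so there is no $\phi$ with $\tau_4\circ\phi=\tau_2$. Hence the ``sequential subsystem indexed by powers of $2$'' you describe does not exist, and the appeal to ``group homology commutes with filtered colimits'' is not justified for this colimit: the index category is not filtered, since all its structure maps point \emph{into} the diagonal objects.

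The same issue undermines the finite case as you argue it. For $B=\z/2\oplus\z/4$, of exponent $2^N$ with $N=2$, the map $\tau_4\colon B\otimes_\z B\arr\tors(B,B)$ kills the summand $\z/2\otimes_\z\z/2$ while the corresponding summand $\tors(\z/2,\z/2)=\z/2$ survives; so $\tau_{2^N}$ is not an isomorphism, and it does not even induce one on $H_1(\Sigma_2,-)$. Nor is the system ``constant with identity structure maps'' from the $N$-th term on, because the maps $p_m\otimes\id$ are multiplication by $2^{k-j}$ on the first factor, which is typically zero there. The corollary itself is correct --- for a finite $2$-group $B$ one can verify $H_1(\Sigma_2^\varepsilon,\tors(B,B))\simeq H_1(\Sigma_2,B\otimes_\z B)$ by decomposing $B$ into cyclic summands and using $\tau_{2^{a_i}}$ on each diagonal piece $\z/2^{a_i}\otimes_\z\z/2^{a_i}$, where it \emph{is} an isomorphism, while the off-diagonal pieces are induced modules and contribute nothing to $H_1$ --- but establishing it requires engaging with the actual shape of Breen's system (or first reducing to finitely generated subgroups by a genuinely filtered colimit), rather than the shortcut you take.
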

\begin{proof}
This follows from Theorem \ref{H4} and Proposition \ref{tor-lim}.
\end{proof}

\section{The third homology of stem-extensions}\label{stem}

A central extension $A \tail G \two Q$ is called a {\it stem-extension} if
the natural map $A=H_1(A,\z) \arr H_1(G,\z)$ is trivial and it is called a {\it weak
stem-extension} if the natural map $A \otimes A \arr A \otimes_\z H_1(G,\z)$ is trivial \cite{stammbach1973}. 

 Let $A \tail G \two Q$ be a weak stem-extension and let $n\geq 2$. From the commutative diagram 
\begin{equation}\label{diagram-mul}
\begin{tikzcd}
A \times A \ar[r, "\mu"]\ar[d] & A \ar[d]\\
A \times G \ar[r, "\rho"] & G,
\end{tikzcd}
\end{equation}
where $\mu$ and $\rho$ are the usual multiplications, we obtain
the commutative diagram
\[
\begin{tikzcd}
H_{n-2}(A,\z)\otimes_\z H_1(A,\z) \otimes_\z H_1(A,\z) \ar[r] \ar[d, "=0"] & H_n(A,\z) \ar[d]\\
H_{n-2}(A,\z)\otimes_\z H_1(A,\z) \otimes_\z H_1(G,\z) \ar[r] & H_{n}(G,\z).
\end{tikzcd}
\]
It follows that the composite 
$\begin{array}{c}
\bigwedge_\z^n A \arr H_n(A,\z) \arr H_n(G,\z)
\end{array}$
is trivial. In particular, the natural image of $H_n(A,\z)$ in $H_n(G,\z)$ is torsion, because 
the map $\bigwedge_\z^n A \arr H_n(A,\z)$ has torsion cokernel \cite[Theorem 6.4, Chap. V]{brown1994}.

We say that a central extension $A\tail G \two Q$ is a {\it quasi stem-extension}
if the natural maps $\tor_i^\z(A,A) \arr \tor_i^\z(A, H_1(G,\z))$  are trivial for $i=0,1$.
In particular quasi stem-extensions are weak stem extensions. The following theorem  
generalizes \cite[Theorem]{M-M-O}.

\begin{thm}\label{2-torsion-image}
Let $A\tail G\two Q$ be a quasi stem-extension.  Then there is a natural map 
\[
\varphi: H_1(\Sigma_2^\epsilon, \tors({}_{2^\infty}A,{}_{2^\infty}A))\arr H_3(G,\z)/\rho_\ast(A \otimes H_2(G,\z)) 
\]
such that the image of $\varphi$ coincides with the natural image of $H_3(A,\z)$ in $H_3(G,\z)/\rho_\ast(A \otimes H_2(G,\z)) $. 
In particular the image of $H_3(A,\z)$ in $H_3(G,\z)/\rho_\ast(A \otimes H_2(G,\z)) $ is $2$-torsion.
\end{thm}
\begin{proof}
We argue as the proof of \cite[Theorem 2]{M-M-O}.
By Proposition \ref{H3A} we have  the exact sequence
\[
\begin{array}{c}
0 \arr \bigwedge_\z^3 A \arr H_3(A,\z) \arr \tors(A,A)^{\Sigma_2^\epsilon} \arr 0,
\end{array}
\]
where the homomorphism on the right side of the exact sequence is obtained from the composition
$H_3(A,\z) \overset{{\Delta}_\ast}{\larr } H_3(A\times A,\z) \arr \tors(A,A)$,
$\Delta$ being the diagonal map $A \arr A\times A$, $a \mt (a,a)$. 
From (\ref{diagram-mul}), we obtain the commutative diagram
\[
\begin{tikzcd}
\widetilde{H}_3(A\times A)\ar[r, "\mu_\ast"]\ar[d]& H_3(A,\z)\ar[d]\\
\widetilde{H}_3(A\times G)\ar[r, "\rho_\ast"]& H_3(G,\z),
\end{tikzcd}
\]
where
\[
{\widetilde{H}}_3(A\times A):=\ker(H_3(A\times A,\z)\overset{({p_1}_\ast, {p_2}_\ast)}{-\!\!\!-\!\!\!-\!\!\!\larr}
H_3(A,\z) \oplus H_3(A,\z)),
\]
\[
\tilde{H}_3(A\times G):=\ker(H_3(A\times G,\z)\overset{({p_1}_\ast, {p_2}_\ast)}{-\!\!\!-\!\!\!-\!\!\!\larr}
H_3(A,\z) \oplus H_3(G,\z)).
\]
The previous lemma  implies that  the composite 
\[
\begin{array}{c}
\bigwedge_\z^3 A\arr H_3(A,\z)\arr H_3(G,\z)
\end{array}
\]
is trivial. These facts together with the K\"unneth formula  give us the commutative diagram
\[
\begin{tikzcd}
\tors(A,A) \ar[ddd, bend right=300] \ar[r, "\bar{\mu}_\ast"] & 
\tors(A,A)^{\Sigma_2^\epsilon}\\
{\tilde{H}}_3(A\times A) /\bigoplus_{i=1}^2 H_i(A,\z)\otimes_\z H_{3-i}(A,\z)
\ar[ u, "\overset{\alpha}{\simeq}"]\ar[r, "\mu_\ast"] \ar[d, , "\widetilde{\inc}_\ast"]& 
H_3(A,\z)/\bigwedge_\z^3 A \ar[u, "\overset{\beta}{\simeq}"] \ar[d, "\inc_\ast"] \\
{\tilde{H}}_3(A\times G)/\bigoplus_{i=1}^2 H_i(A,\z)\otimes_\z H_{3-i}(G,\z) \ar[d, "\simeq"] \ar[r, "\rho_\ast"]& 
H_3(G,\z)/M\\
\tors(A,H_1(G,\z)), &
\end{tikzcd}
\]
where $M=\rho_\ast(A\otimes_\z H_2(G,\z))$. Note that $\im(H_2(A,\z)\otimes_\z H_1(G,\z) \arr H_3(G,\z))\se M$
(see \cite[Proposition 4.4, Chap. V]{stammbach1973}).
Since the map 
\[
\tors(A,A)\arr \tors(A,H_1(G,\z))
\]
is trivial,  $\rho_\ast\circ \widetilde{\inc}_\ast\circ \alpha^{-1}=0$. This shows that the 
composite map $\inc_\ast\circ\beta^{-1}\circ \bar{\mu}_\ast$ is trivial. Therefore the image of $H_3(A,\z)$ in 
$H_3(G,\z)/M$ is equal to the image of
\[
\tors(A,A)^{\Sigma_2^\epsilon}/\bar{\mu}_\ast\tors(A,A)=\tors(A,A)^{\Sigma_2^\varepsilon}/{(\Delta_A\circ\mu)}_\ast(\tors(A,A)).
\]
Now as in the proof of Theorem \ref{H4}, one can show that 
\[
\tors(A,A)^{\Sigma_2^\varepsilon}/{(\Delta_A\circ\mu)}_\ast(\tors(A,A))\simeq \ker(\Psi)
\]
which by Theorem \ref{H4} is isomorphic to $H_1(\Sigma_2^\epsilon, \tors({}_{2^\infty}A,{}_{2^\infty}A))$. 
This complete the proof of the theorem.
\end{proof}

\begin{rem}
In general $\varphi$ does not necessarily factors through $\Gamma(A)$. In fact we have the following commutative diagram
\[
\begin{tikzcd}
H_1(\Sigma_2^\epsilon, \tors({}_{2^\infty}A,{}_{2^\infty}A)) \ar[r, "\varphi"] \ar[d, hook] & H_3(G,\z)/\rho_\ast(A \otimes H_2(G,\z))\ar[d, two heads]\\
\Gamma(A) \ar[r]& H_3(G,\z)/\rho_\ast(\tilde{H}_3(A \times G)).
\end{tikzcd}
\]
Thus $\varphi$ factors through $\Gamma(A)$ if the extension is a stem-extension.
\end{rem}

\begin{cor}
Let $A\tail G\two Q$ be a universal central extension.  Then the image of $H_3(A,\z)$ in $H_3(G,\z)$ coincides with the image of
\[
\varphi: H_1(\Sigma_2^\epsilon, \tors({}_{2^\infty}A,{}_{2^\infty}A))\arr H_3(G,\z).
\]
In particular the image of $H_3(A,\z)$ in $H_3(G,\z)$ is $2$-torsion.
\end{cor}

\begin{rem}
Let $A \tail G \two Q$ be a perfect central extension, i.e. $G$ is a perfect group and the extension is central. By an easy analysis
of the Lyndon-Hochschild-Serre spectral sequence of the extension, we obtain
the exact sequence 
\[
H_4(Q,\z) \arr T \arr H_3(G,\z)/N \arr H_3(Q,\z) \arr 0, 
\]
where $T=\ker(A \otimes A \arr H_2(A,\z))=\langle a\otimes a: a\in A\rangle$ and
$N:=\rho_\ast\Big(H_3(A,\z)\oplus A \otimes H_2(G,\z)\Big)$.

Moreover from the fibration of Eilenberg-MacLane spaces $K(A,1) \arr K(G,1) \arr K(Q,1)$,
we obtain the fibration \cite[Lemma 3.4.2]{may-ponto2012}
\[
K(G,1) \arr K(Q,1) \arr K(A,2).
\]
By studying the Serre spectral sequence associated to this fibration
we obtain the exact sequence
\[
H_4(Q, \z)\arr \Gamma(A) \arr H_3(G, \z)/M\arr H_3(Q,\z)\arr 0,
\]
where $M:=\rho_\ast\Big(A\otimes_\z H_2(G,\z)\Big)$.
Now we have the following commutative diagram with exact rows and columns:
\[
\begin{tikzcd}
& & 
H_4(Q,\z) \ar[r, "="] \ar[d] & H_4(Q,\z) \ar[d] &\\
0 \larr \!\!\!\!\!\!\!\!\!\!\!\!\!\!\!\!\!\! & H_1(\Sigma_2^\epsilon, \tors({}_{2^\infty}A,{}_{2^\infty}A)) \ar[r]
& \Gamma(A) \ar[r] \ar[d] &  T \ar[d] &
\!\!\!\!\!\!\!\!\!\!\!\!\!\!\!\!\!\!\!\!\!\!\!\!\!\!\!\!\!-\!\!\!
-\!\!\!-\!\!\!-\!\!\!\larr 0\\
& H_3(A,\z)/\bigwedge_{\z}^3 A \ar[u] \ar[r]
&H_3(G,\z)/M \ar[r] \ar[d] & 
H_3(G,\z)/N \ar[d] & \!\!\!\!\!\!\!\!\!\!\!\!\!\!\!\!\!\!\!\!\larr 0.\\
&&  H_3(Q,\z) \ar[r, "="] \ar[d] & H_3(Q,\z)\ar[d] &\\
&&  0 & 0 &
\end{tikzcd}
\]
\end{rem}

\begin{exa}
For any sequence of abelian groups $A_n$, $n\geq 2$, Berrick and Miller constructed 
a perfect group $Q$ such that $H_n(Q,\z)\simeq A_n$ \cite[Theorem~1]{berrick-miller1992}.

Let $A$ be an abelian group. Choose a perfect 
group $Q$ such that $H_2(Q,\z)\simeq A$ and $H_4(Q,\z)=0$. Then if $A \tail G \two Q$ is the 
universal central extension of $Q$, we have the exact sequence
\[
0\arr \Gamma(A) \arr H_3(G,\z)\arr H_3(Q,\z)\arr 0.
\]
Thus the map $\varphi: H_1(\Sigma_2^\epsilon, \tors({}_{2^\infty}A,{}_{2^\infty}A)) \arr H_3(G,\z)$ is injective here.
This example also show that the kernel of the natural map $H_3(G,\z)\arr H_3(Q,\z)$ usually is larger than the image of $\varphi$.
\end{exa}



%
%
%

%
%
\end{document}